\newtheorem{thm}{Theorem}
\newtheorem{defn}{Definition}
\newtheorem{lem}{Lemma}
\newtheorem{pro}{Proposition}
\newtheorem{cor}{Corollary}
\newtheorem{exa}{Example}
\numberwithin{equation}{section}
\begin{document}
\title[Subgroups and normal subgroups]
{Invariance property on group representations of the Cayley tree and
its applications}

\author{U. A. Rozikov, F. H. Haydarov}
\address{U. A. Rozikov \\ Institute of Mathematics,\\
Tashkent, Uzbekistan.}
 \email {rozikovu@yandex.ru.}

 \address{F.H.Haydarov\\ National University of Uzbekistan,
Tashkent, Uzbekistan.}
 \email {haydarov\_ imc@mail.ru.}

\begin{abstract}

In the present paper, we give a certain condition on subgroups
of the group representation of the Cayley tree such that an
invariance property holds. Except for the given condition, we show
that the invariance property does not hold.

\end{abstract}

\maketitle

{\bf{Key words.}}  Group representation of Cayley tree, subgroups,
invariance property, Ising model.\\

AMS Subject Classification: 20B07, 20E06.\\

\section{Introduction}

   There are many open problems of group theory which arise in studying of
   problems of natural sciences such as physics, mechanics, biology and so on.
  For instance, a configuration of a physical system on tree can be considered as a function defined on
  the set of vertices of the tree (see \cite{nr},\cite{7},\cite{vvnr}). On the set of configurations of a model one
  defines a Gibbs measure. To introduce periodic and weakly periodic
  Gibbs measures one needs subgroups of the group representation of the tree (\cite{5}).
   The invariance property of subgroups of group representation of Cayley tree is sufficient
   to study periodic or weakly periodic Gibbs measures on a Cayley
   tree.

   For any normal subgroup of finite index of group representation of Cayley
   tree, the invariance property holds, that's why the theory of
   periodic and weakly periodic Gibbs measures corresponding to normal subgroups has been well developed.
    On the other hand side, up to now for any subgroup (not normal) of the group
   representation of the tree, the invariance property has not
   been proved, so there is no any result weakly periodic and
   periodic Gibbs measures corresponding to subgroups (not normal)
   of group representation of Cayley tree (see detail in \cite{5}).

   Thus, if the invariance property holds then we can study periodic and
weakly periodic Gibbs measures corresponding to an arbitrary
subgroup of finite index of group representation of Cayley tree.

In this paper we find a certain condition on subgroups (not normal)
of the group representation of Cayley tree under which the
invariance property holds.
\section{Invariance property and its applications}
\subsection{Subgroups of odd index.}
Let $G_{k}$ be a free product of $k+1$ cyclic groups of the second
order with generators $a_{1},a_{2},...a_{k+1},$ respectively.

\begin{thm}\label{th0}\emph{\cite{5}}
\item 1. The group $G_{k}$ does not have normal subgroups of odd
index $(\neq 1)$.
\item 2. The group $G_{k}$ has normal subgroups of arbitrary even index.
\end{thm}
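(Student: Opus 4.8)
The plan is to exploit the universal property of the free product. Since $G_k=\langle a_1,\dots,a_{k+1}\mid a_i^2=e\rangle$ is the free product of $k+1$ copies of the cyclic group of order two, a homomorphism from $G_k$ to any group $Q$ is the same thing as an arbitrary choice of elements $b_1,\dots,b_{k+1}\in Q$ with $b_i^2=e$ for each $i$; moreover the homomorphism is surjective precisely when the $b_i$ generate $Q$. Both parts of the theorem will follow from analysing the quotients $Q=G_k/H$ through this correspondence.

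For part 1, I would argue by contradiction. Suppose $H\trianglelefteq G_k$ has odd index $n>1$, and set $Q=G_k/H$, a group of odd order $n$. Writing $\bar a_i$ for the image of $a_i$ under the canonical projection, each $\bar a_i$ satisfies $\bar a_i^2=e$, so its order divides $2$. By Lagrange's theorem the order of $\bar a_i$ divides $n$, which is odd, so the only possibility is $\bar a_i=e$ for every $i$. Since the $a_i$ generate $G_k$, their images generate $Q$, forcing $Q=\{e\}$ and hence $n=1$, contradicting $n>1$.

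For part 2, I would realise every even index as the index of an explicit kernel. Given an even number $2m$ with $m\ge 1$, recall that the dihedral group $D_m=\langle r,s\mid r^m=s^2=e,\ srs=r^{-1}\rangle$ has order $2m$ and is generated by the two involutions $s$ and $t:=sr$, since $t^2=srsr=s(sr^{-1})r=e$ and $st=r$ gives $\langle s,t\rangle=\langle s,r\rangle=D_m$. I would then define $\varphi\colon G_k\to D_m$ by $a_1\mapsto s$, $a_2\mapsto t$ and $a_i\mapsto e$ for $i\ge 3$; by the universal property this is a well-defined homomorphism, and it is surjective because $s,t$ generate $D_m$. Its kernel is then a normal subgroup of $G_k$ of index $|D_m|=2m$, and as $m$ ranges over the positive integers this produces normal subgroups of every even index.

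The construction needs $k+1\ge 2$, i.e. two generators to carry the two involutions, which always holds for the Cayley tree; the case of index $2$ (where $m=1$ and $D_1\cong\mathbb{Z}/2\mathbb{Z}$) is equally covered, the relevant homomorphism being the length-parity map sending every $a_i$ to the nontrivial element. I do not expect a serious obstacle: the only point requiring care is the verification that $D_m$ is genuinely generated by two elements of order two and has order exactly $2m$, so that $\varphi$ is both well-defined and surjective. The heart of the argument is thus the translation, in both directions, between normal subgroups of $G_k$ and homomorphisms determined by involutions, supplied by the free-product structure.
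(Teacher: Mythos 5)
The paper does not prove this theorem: it is quoted from the monograph \cite{5} and used as a black box, so there is no in-paper argument to compare yours against. Judged on its own, your proof is correct and complete. Part 1 is the standard argument: in a quotient of odd order every image $\bar a_i$ has order dividing both $2$ and the odd index, hence is trivial, so the quotient is trivial --- this is essentially the argument given in the cited source. For part 2 your dihedral construction works: $D_m$ has order $2m$ and is generated by the involutions $s$ and $sr$, so the universal property of the free product of copies of $\mathbb{Z}/2\mathbb{Z}$ yields a surjection $G_k\to D_m$ whose kernel is normal of index $2m$; the degenerate case $m=1$ collapses to the parity homomorphism as you note. This is a clean uniform way to hit every even index at once; the construction in \cite{5} instead proceeds through explicitly described families of normal subgroups (kernels of homomorphisms built from counting occurrences of generators, and their intersections and quotient constructions), which buys a concrete description of the subgroups themselves rather than just their existence, but your route is shorter for the bare existence statement. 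The only point worth making explicit if you write this up is the verification that $D_m$ really has order $2m$ and that $s$, $sr$ are involutions generating it, which you have done.
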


By Theorem $\ref{th0}$ we can conclude that any subgroup of odd
index of the group $G_k$ is not normal subgroup.
Up to now, the problem of existence of subgroup of any odd index
of the group $G_k$ has been being an open problem. Here, we
give subgroups of index $2s+1$ of the group
$G_k$, where $s\in \mathbb{N}$.

 Let $N_k=\{1,2,...,k+1\}$ and $A_0\subset N_k, 0\leq|A_{0}|\leq k-1.$ $(A_{1}, A_{2})$ be a partition of the set
 $N_{k}\backslash A_{0}$. Put $m_{j}$ be a minimal element of
$A_{j},\,\ j\in\{1,2\}.$ Then we consider the function
$u_{A_{1}A_{2}}:\{a_{1},a_{2},...,a_{k+1}\}\rightarrow \{e,
a_{1}...,a_{k+1}\}$ (where $e$ is identity element) given by
\begin{equation}\label{func1}u_{A_{1}A_{2}}(x)=\left\{\begin{array}{ll}
e, \ \ \mbox{if} \ \ x=a_{i},\ i\in N_{k}\setminus (A_{1}\cup A_{2})\\[3mm]
a_{m_{j}}, \ \mbox{if}\ \ x=a_{i},\ i\in A_{j},\ j=1,2.
\\ \end{array}\right.\end{equation}
For $1\leq q\leq s$ define $\gamma_s: <e, a_{m_{1}},
a_{m_{2}}> \ \rightarrow <e, a_{m_{1}}, a_{m_{2}}>$ by the formula\\
\begin{equation}\label{func2}\gamma_s(x)=\left\{\begin{array}{lll}
e  \ \mbox{if} \ \ x=e\\[2mm]
a_{m_{1}}a_{m_{2}}a_{m_{1}}...a_{m_{j}} \ \mbox{if} \ \
x\in\{\underbrace{a_{m_{1}}a_{m_{2}}a_{m_{1}}...a_{m_{j}}}_{q},\
\underbrace{a_{m_{2}}a_{m_{1}}a_{m_{2}}...a_{m_{3-j}}}_{2s+1-q}\}\\[2.5mm]
a_{m_{2}}a_{m_{1}}a_{m_{2}}...a_{m_{j}} \ \mbox{if} \ \
x\in\{\underbrace{a_{m_{2}}a_{m_{1}}a_{m_{2}}...a_{m_{j}}}_{q},\
\underbrace{a_{m_{1}}a_{m_{2}}a_{m_{1}}...a_{m_{3-j}}}_{2s+1-q}\}\\[2.5mm]
\gamma_s(a_{m_{j}}...\gamma_s(\underbrace{a_{m_{j}}a_{m_{3-j}}...a_{m_{3-j}}}_{2s}))
 \ \mbox{if} \ \ x=a_{m_{j}}a_{m_{3-j}}...a_{m_{3-j}},\ l(x)>2s \\[2.5mm]
\gamma_s(a_{m_{j}}...\gamma_s(\underbrace{a_{m_{3-j}}a_{m_{j}}...a_{m_{j}}}_{2s}))
 \ \mbox{if} \ \ x=a_{m_{j}}a_{m_{3-j}}a_{m_{j}}...a_{m_{j}},\ l(x)>2s,
 \end{array}\right.\end{equation}
where $l(x)$ is the length of $x$.

Denote
\begin{equation}\label{func3}\Im^{s}_{A_{1}A_{2}}(G_{k})=\{x\in G_{k}|\
\gamma_s(u_{A_{1}A_{2}}(x))=e\}\end{equation}

\begin{lem}\label{lem1} Let $(A_{1}, \  A_{2})$
be a partition of the set $N_{k}\backslash A_{0}, \,\
0\leq|A_{0}|\leq k-1.$ Then $x\in \Im^{s}_{A_{1}A_{2}}(G_{k})$ if
and only if the number $l(u_{A_{1}A_{2}}(x))$ is divisible by
$2s+1$.
\end{lem}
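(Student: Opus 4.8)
The plan is to reduce the statement to a purely combinatorial fact about the map $\gamma_s$ on the subgroup $H:=\langle a_{m_1},a_{m_2}\rangle$, and then to prove that fact by induction on word length. First I would observe that, since $G_k$ is the free product of the cyclic groups $\langle a_i\rangle$ of order two, a map defined on the generators extends to a homomorphism as soon as every generator is sent to an element of order dividing two. The images $e,a_{m_1},a_{m_2}$ appearing in (\ref{func1}) all have this property, so $u_{A_1A_2}$ extends uniquely to a homomorphism $u_{A_1A_2}\colon G_k\surj H$. Because $m_1\in A_1$ and $m_2\in A_2$ are the minimal elements, $u_{A_1A_2}(a_{m_1})=a_{m_1}$ and $u_{A_1A_2}(a_{m_2})=a_{m_2}$, so this homomorphism is onto $H$, and for every $x\in G_k$ the element $u_{A_1A_2}(x)$ is a reduced alternating word in $a_{m_1}$ and $a_{m_2}$. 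In particular $l(u_{A_1A_2}(x))$ is well defined and, as $x$ ranges over $G_k$, $w:=u_{A_1A_2}(x)$ ranges over all of $H$. Thus, by (\ref{func3}), the lemma is equivalent to the claim
\begin{equation*}
\gamma_s(w)=e \ \eqv\ (2s+1)\mid l(w)\qquad\text{for every }w\in H.
\end{equation*}

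I would prove this claim by induction on $l(w)$. For the base of the induction I would read off the first three cases of (\ref{func2}), which describe $\gamma_s$ on all words of length at most $2s$. When $l(w)=0$ we have $w=e$ and $\gamma_s(e)=e$, in agreement with $(2s+1)\mid 0$. When $1\le l(w)\le 2s$, setting $q=l(w)$ if $l(w)\le s$ and $q=2s+1-l(w)$ if $s+1\le l(w)\le 2s$, the two middle cases assign to $w$ a nonempty alternating word (of length $q$, with a prescribed leading letter), so $\gamma_s(w)\ne e$; this matches $(2s+1)\nmid l(w)$, since $1\le l(w)\le 2s$. This disposes of every nonzero residue modulo $2s+1$ at the level of the base case, and records for later use the precise leading letter produced in each case.

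For the inductive step, with $l(w)>2s$, the essential point is to extract from the last two cases of (\ref{func2}) the reduction identity $\gamma_s(w)=\gamma_s(w')$, where $w'\in H$ has length $l(w')=l(w)-(2s+1)$ and a leading letter determined by that of $w$. Granting this identity, iterating it brings $w$ down to a word $w_0$ of length $l(w)\bmod(2s+1)\in\{0,1,\dots,2s\}$ with $\gamma_s(w)=\gamma_s(w_0)$, and the base case gives $\gamma_s(w_0)=e$ precisely when $l(w_0)=0$, that is, precisely when $(2s+1)\mid l(w)$. Equivalently, one may simply invoke the induction hypothesis on $w'$: since $(2s+1)\mid l(w)\eqv(2s+1)\mid l(w')$, the claim for $w$ follows at once. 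Transporting this equivalence back through the surjection $u_{A_1A_2}$ then yields the lemma.

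The main obstacle I anticipate is exactly the verification of the reduction identity $\gamma_s(w)=\gamma_s(w')$ with $l(w')=l(w)-(2s+1)$ from the nested expressions in the last two cases of (\ref{func2}): one must unwind the inner application of $\gamma_s$ to the indicated block of length $2s$, keep careful track of how the leading generator alternates as letters are consumed (using that $2s+1$ is odd, so that removing a full block flips the leading letter), and check that no accidental cancellation in $H$ disturbs the length bookkeeping. One must also confirm that iterating the reduction genuinely terminates in the range $\{0,\dots,2s\}$ rather than overshooting. Once this single identity is pinned down, the remaining argument is a routine induction together with the base-case reading already carried out above.
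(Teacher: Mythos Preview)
Your proposal is correct and follows essentially the same approach as the paper: reduce to the behaviour of $\gamma_s$ on $H=\langle a_{m_1},a_{m_2}\rangle$, then repeatedly apply the recursive clauses of (\ref{func2}) to strip off blocks of length $2s+1$ until the word has length at most $2s$, at which point $\gamma_s$ returns $e$ iff the word is empty. The paper's proof is terser---it writes out one step of the reduction, indicates the iteration with ``$=\dots$'', and handles the converse by noting that a word of length divisible by $2s+1$ is a product of the two alternating words of length $2s+1$, each of which $\gamma_s$ sends to $e$---whereas you frame the same argument as an explicit induction and single out the homomorphism property of $u_{A_1A_2}$ up front; but the underlying idea is the same.
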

\begin{proof} Let $x=a_{i_{1}}a_{i_{2}}...a_{i_{n}}\in G_{k}$
and $l(u_{A_{1}A_{2}}(x))$ be odd (the case even is similar).
 Then we can write $u_{A_{1}A_{2}}(x)=a_{m_{i}}a_{m_{3-i}} ...
 a_{m_{i}}a_{m_{3-i}}a_{m_{i}}.$ Then $\gamma_s(u_{A_{1}A_{2}}(x))$ is
 equal to\\
 $$\gamma_s(a_{m_{i}}a_{m_{3-i}} ... a_{m_{i}}a_{m_{3-i}}a_{m_{i}})=\gamma_s(a_{m_{i}}a_{m_{3-i}} ...
 a_{m_{i}}\gamma_s(\underbrace{a_{m_{3-j}}a_{m_{j}}...a_{m_{j}}}_{2s}))=$$
 $$=\gamma_s(\underbrace{a_{m_{i}}a_{m_{3-i}} ...
 a_{m_{3-i}}a_{m_{i}}}_{l(u_{A_{1}A_{2}}(x))-2s-1})=...$$
 We continue this process until the length of the word
 $\gamma_s(u_{A_{1}A_{2}}(x))$ will be less than $2s+1$. From
 $\gamma_s(u_{A_{1}A_{2}}(x))=e$ we deduce that
 $l(u_{A_{1}A_{2}}(x))$ is divisible by $2s+1$.

 Conversely, if $l(u_{A_{1}A_{2}}(x))$ is divisible by $2s+1$ then $x$ is generated by the
 elements $\underbrace{a_{m_{1}}a_{m_{2}}a_{m_{1}}...a_{m_{1}}}_{2s+1}$ and
$\underbrace{a_{m_{2}}a_{m_{1}}a_{m_{2}}...a_{m_{2}}}_{2s+1}.$
Since
$\gamma_s(\underbrace{a_{m_{1}}a_{m_{2}}a_{m_{1}}...a_{m_{1}}}_{2s+1})=\gamma_s(\underbrace{a_{m_{2}}a_{m_{1}}a_{m_{2}}...a_{m_{2}}}_{2s+1})=e,$
 we get $x\in \Im^{s}_{A_{1}A_{2}}(G_{k}).$
\end{proof}
In spite of giving a full description of subgroups of the group
$G_{k}$ is too hard, there are some papers which devoted to the
description of normal subgroups of $G_k.$(see \cite{hay},
\cite{4}) The next proposition gives us a full description of all
(not normal) subgroups of index three.
\begin{pro}\label{pr2} For the group $G_{k}$ the following
equality holds
$$\{K|\ K\ is\ a\ subgroup\ of\ G_{k}\ of \ index \ 3\}=\ \ \ \ \ \ \ \ \ \ \ \ \ \ \ \
\ \ \ \ \ \ \ \ \ \ \ \ \ \ \ \ \ \ \ \ $$
$$\ \ \ \ \ \ \ \ \ \ \ \ \ \ \ \ \ \ \ \ =\{\Im^{1}_{A_{1}A_{2}}(G_{k})|\ A_{1},\ A_{2} \ is\ a\ partition\ of\ N_{k}\setminus A_{0}\}.$$
\end{pro}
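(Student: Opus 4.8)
The natural approach is to pass to permutation representations on the three cosets, and to treat the two inclusions separately; I expect essentially all the difficulty to sit in showing that the family $\{\Im^1_{A_1A_2}(G_k)\}$ exhausts every index-$3$ subgroup. For the inclusion $\supseteq$, I would first argue that each $\Im^1_{A_1A_2}(G_k)$ is genuinely a subgroup of index $3$. Since $u_{A_1A_2}$ sends each generator to an element squaring to $e$, it respects the relations $a_i^2=e$ and so extends to a homomorphism of $G_k$ onto $\langle a_{m_1},a_{m_2}\rangle$, which is the infinite dihedral group. By Lemma \ref{lem1}, $\Im^1_{A_1A_2}(G_k)$ is exactly the preimage under this homomorphism of the set $\{y : l(y)\equiv 0 \pmod 3\}$. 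A direct check in the infinite dihedral group shows that this length-class set is itself a subgroup of index $3$ (its three cosets are distinguished by the length of the reduced alternating word modulo $3$), and the preimage of an index-$3$ subgroup under a surjection again has index $3$; this yields $\supseteq$.

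For the inclusion $\subseteq$, let $K\le G_k$ have index $3$. The left action on the cosets $G_k/K$ gives a homomorphism $\rho:G_k\to S_3$ with transitive image, for which $K=\rho^{-1}(\mathrm{Stab}(eK))$. Since every $\rho(a_i)$ squares to the identity, it is either trivial or a transposition, so the image is generated by transpositions; being transitive it must be all of $S_3$, because the only other transitive subgroup $A_3\cong\mathbb{Z}/3$ contains no involution and would force $\rho$ to be trivial. This is consistent with Theorem \ref{th0}, which tells us $K$ cannot be normal. Labelling the cosets so that $eK$ corresponds to the point $1$, so that $K=\rho^{-1}(\mathrm{Stab}(1))$, I would then read off a candidate partition by setting $A_0=\{i:\rho(a_i)=e\}$ and distributing the remaining indices according to the value of $\rho(a_i)$, and finally check that the $\Im^1$ attached to this partition reproduces the same action on the three cosets, hence coincides with $K$.

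The main obstacle is precisely the bookkeeping of transpositions in this last step. By construction every generator lying in $\Im^1_{A_1A_2}(G_k)$ is forced to map to $e$, and the only transpositions that occur in its coset action are the two moving the base point $1$; so to complete $\subseteq$ one must show that an arbitrary index-$3$ subgroup can be normalised into exactly this shape. Concretely, I would try to exploit the freedom of relabelling the two non-base points (conjugation by $\mathrm{Stab}(1)$) to prevent any generator from mapping to the transposition that fixes the base point, and then pair the two surviving transpositions with $a_{m_1}$ and $a_{m_2}$. Carrying out this reduction uniformly for every index-$3$ subgroup—or else separately accounting for those subgroups in which some generator does map into $\mathrm{Stab}(1)$—is the decisive point on which the argument stands or falls, and is where I would focus most of the verification.
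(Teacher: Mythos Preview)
Your $\supseteq$ argument is clean and correct: the homomorphism $u_{A_1A_2}$ onto the infinite dihedral group $\langle a_{m_1},a_{m_2}\rangle$, followed by the observation that ``length $\equiv 0 \pmod 3$'' is the preimage of $\mathrm{Stab}(1)$ under the surjection $a_{m_1}\mapsto(12)$, $a_{m_2}\mapsto(13)$ onto $S_3$, does give each $\Im^1_{A_1A_2}(G_k)$ index $3$. This is more transparent than what the paper does, which instead verifies the subgroup property directly via Lemma~\ref{lem1} and the additivity of $l(u_{A_1A_2}(\cdot))$.

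Your worry about $\subseteq$ is the real point, and it is not a difficulty to be overcome: it is a counterexample. The relabelling freedom you propose to use is conjugation by $\mathrm{Stab}(1)=\langle(23)\rangle$, but $(23)$ centralises itself, so a generator that maps to $(23)$ cannot be conjugated away. Concretely, for $k=2$ take $\rho:G_2\to S_3$ with $\rho(a_1)=(12)$, $\rho(a_2)=(13)$, $\rho(a_3)=(23)$ and set $K=\rho^{-1}(\mathrm{Stab}(1))$, a subgroup of index $3$. Then $a_3\in K$ while $a_1,a_2\notin K$, so the only candidate partition is $A_0=\{3\}$, $\{A_1,A_2\}=\{\{1\},\{2\}\}$. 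But $u_{\{1\}\{2\}}(a_1a_3a_1)=a_1\cdot e\cdot a_1=e$, hence $a_1a_3a_1\in\Im^1_{\{1\}\{2\}}(G_2)$, whereas $\rho(a_1a_3a_1)=(12)(23)(12)=(13)\notin\mathrm{Stab}(1)$, so $a_1a_3a_1\notin K$. Thus $K$ is an index-$3$ subgroup not of the form $\Im^1_{A_1A_2}(G_k)$, and the equality in Proposition~\ref{pr2} fails as stated.

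The paper's own proof does not confront this: it extracts $A_0,A_1,A_2$ from $K$ by sorting the generators into cosets, then checks that $\Im^1_{A_1A_2}(G_k)$ is a subgroup, but never verifies that $\Im^1_{A_1A_2}(G_k)=K$. Your permutation-representation framework makes the missing hypothesis visible: the construction $\Im^1_{A_1A_2}$ produces exactly those index-$3$ subgroups for which every generator lying in $K$ acts \emph{trivially} on $G_k/K$ (equivalently, lies in the normal core of $K$), and the inclusion $\subseteq$ holds only under that extra assumption.
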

\begin{proof} Let $K$ be a subgroup of the group
$G_{k}$ with $|G_{k}:K|=3$. Then there exist $p,q\in N_{k}$ such
that $|\{K,\ a_{p}K,\ a_{q}K\}|=3$. Put\\
$$A_{0}=\{i\in N_{k}|\ a_{i}\in K\},
 \ \ \ \ A_{1}=\{i\in N_{k}|\ a_{i}\in a_{p}K\},
 \ \ \ \ A_{2}=\{i\in N_{k}|\ a_{i}\in a_{q}K\}$$\\
From $|G_{k}:K|=3$ we conclude that $\{A_{1}, A_{2}\}$ is a
partition of $N_{k}\setminus A_{0}.$ Let $m_{i}$ be a minimal
element of $A_{i},\ i=1,2$. If we show that
$\Im^{1}_{A_{1}A_{2}}(G_{k})$ is a subgroup of $G_{k}$
(corresponding to $K$) then it'll be given one to one
correspondence between given sets. For
$x=a_{i_{1}}a_{i_{2}}...a_{i_{n}}\in \Im^{1}_{A_{1}A_{2}}(G_{k}),\
y=a_{j_{1}}a_{j_{2}}...a_{j_{m}}\in \Im^{1}_{A_{1}A_{2}}(G_{k})$
it is sufficient to show that
$xy^{-1}\in\Im^{1}_{A_{1}A_{2}}(G_{k}).$ Let\
$u_{A_{1}A_{2}}(x)=a_{m_{i}}a_{m_{3-i}}....a_{m_{s}},\
u_{A_{1}A_{2}}(y)=a_{m_{j}}a_{m_{3-j}}....a_{m_{t}}$, where
$(i,j,s,t)\in \{1,2\}^{4}$. Since $u_{A_{1}A_{2}}$ is a
homomorphism we have\\
$$l\left(u_{A_{1}A_{2}}(xy^{-1})\right)=\left\{\begin{array}{ll}
|l\left(u_{A_{1}A_{2}}(x))-l(u_{A_{1}A_{2}}(y^{-1})\right)|, \
 \ \mbox{if} \ \ s=t\\[2.5mm]
l\left(u_{A_{1}A_{2}}(x))+l(u_{A_{1}A_{2}}(y^{-1})\right),
\ \ \mbox{if} \ \ s\neq t.\\
\end{array}\right.$$\\

By Lemma \ref{lem1} we see that $l\left(u_{A_{1}A_{2}}(x)\right)$
is divisible by 3 and
$l\left(u_{A_{1}A_{2}}(y^{-1})\right)=l\left(\left(u_{A_{1}
A_{2}}(y)\right)^{-1}\right)$ is also divisible by 3, so
$l\left(u_{A_{1}A_{2}}(xy^{-1})\right)$ is a multiple of 3, which
shows that $xy^{-1}\in \Im^{1}_{A_{1}A_{2}}(G_{k})$.
\end{proof}

\subsection{The invariance property}
Now, we study the invariance property for above defined subgroups
of the group $G_k$.

For any element $x$ of $G_k$, denote by $x_{\downarrow}$ on element which
satisfies the following condition: $x^{-1}\cdot
x_{\downarrow}\in\{a_i\ | \ i\in N_k \}$. The set of all direct
successors of $x$ is defined by $S(x)=\{y\in G_k\ |
y_{\downarrow}=x\}.$

\begin{pro}\label{l1}(\textbf{Invariance property}) For $A_{i}=\{m_i\}$ and $x,y\in G_k$ if  $\gamma_s(u_{A_{1}A_{2}}(x))=\gamma_s(u_{A_{1}A_{2}}(y)),$
$\gamma(u_{A_{1}A_{2}}(x_{\downarrow}))=\gamma_s(u_{A_{1}A_{2}}(y_{\downarrow}))$
then
$$\langle\gamma_s(u_{A_{1}A_{2}}(xa_{i}))\ |\ xa_i\in S(x)\rangle=\langle\gamma_s(u_{A_{1}A_{2}}(ya_{i}))\ |\ ya_i\in S(y)\rangle,$$
where $\langle...\rangle$ stands for ordered $k$-tuples. \end{pro}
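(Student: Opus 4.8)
The plan is to transport everything into the infinite dihedral quotient. Since $A_1=\{m_1\}$ and $A_2=\{m_2\}$ are singletons, $u_{A_1A_2}$ extends to the homomorphism $u\colon G_k\to D:=\langle a_{m_1},a_{m_2}\rangle$ that sends every $a_i$ with $i\in A_0=N_k\setminus\{m_1,m_2\}$ to $e$ and fixes $a_{m_1},a_{m_2}$. I would first record the standard picture of $D\cong \mathbb{Z}/2\ast\mathbb{Z}/2$: its Cayley graph on $\{a_{m_1},a_{m_2}\}$ is a line, so each reduced alternating word $w$ carries a signed position $p(w)\in\mathbb{Z}$ with $|p(w)|=l(w)$, taken positive when $w$ starts with $a_{m_1}$. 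Lemma \ref{lem1} together with the first two clauses of \eqref{func2} identify $\gamma_s(w)$ with the balanced residue of $p(w)$ modulo $2s+1$ in $\{-s,\dots,s\}$, and a short induction on $l(w)$ using the last two clauses of \eqref{func2} propagates this to all lengths. Hence the colour $c(z):=\gamma_s(u(z))$ depends only on $p(z):=p(u(z))\bmod(2s+1)$, and the $2s+1$ colours correspond to the residues $\mathbb{Z}/(2s+1)$.

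Next I would compute the successor colours. Writing $\ell(x)$ for the last-letter index of $x$, the successors are the $xa_i$ with $i\neq\ell(x)$, and $u(xa_i)=u(x)\,u(a_i)$. For every $i\in A_0$ this gives $c(xa_i)=c(x)$, so all those entries are the constant colour $c(x)$; only $i=m_1$ and $i=m_2$ can change the colour, by right multiplication of $u(x)$ by $a_{m_1}$ or $a_{m_2}$. On positions these two right multiplications are the involutions $p\mapsto p+1,\ p\mapsto p-1$ when $p$ is even and $p\mapsto p-1,\ p\mapsto p+1$ when $p$ is odd; equivalently, the effect is governed by whether $u(x)$ ends in $a_{m_1}$ or $a_{m_2}$. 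Thus the full successor tuple is determined by $c(x)$, the parity of $l(u(x))$, and $\ell(x)$.

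I would then feed in the hypotheses. From $c(x)=c(y)$ all the passive ($i\in A_0$) entries agree, and from $c(x_\downarrow)=c(y_\downarrow)$ the transition $\Delta:=c(x)-c(x_\downarrow)\in\{0,+1,-1\}$ (computed in $\mathbb{Z}/(2s+1)$) is the same for $x$ and $y$. Here $\Delta=0$ signals $\ell(x)\in A_0$, in which case both active successors $m_1,m_2$ occur, while $\Delta=\pm1$ signals $\ell(x)\in\{m_1,m_2\}$, in which case exactly one active successor occurs. The plan is to run the comparison case by case on $\Delta$, matching the active entries of $x$ and $y$ one at a time.

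The step I expect to be the main obstacle is the parity bookkeeping, and it is most acute when $\Delta=0$. There both active successors are present, with colours $c(x)+1$ and $c(x)-1$, and which generator receives $+1$ versus $-1$ is dictated by the parity of $l(u(x))$. Since $2s+1$ is odd, neither $c(x)$ nor even the pair $(c(x_\downarrow),c(x))$ determines this parity: replacing $u(x_\downarrow),u(x)$ by their translates $2s+1$ positions away preserves both colours while reversing both parities, so a priori the $m_1$- and $m_2$-entries of $x$ and $y$ could be interchanged while all hypotheses still hold. Consequently the crux of the ordered statement is to prove that $c(x)=c(y)$ and $c(x_\downarrow)=c(y_\downarrow)$ force $l(u(x))\equiv l(u(y))\pmod 2$, i.e. that $u(x)$ and $u(y)$ terminate in the same generator, so that the $m_1$-entries of the two tuples coincide and likewise the $m_2$-entries. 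I would attempt this by tracking the parity jointly with the colour along the edge $x_\downarrow\to x$ and by pinning down exactly what additional information the singleton hypothesis $A_i=\{m_i\}$ supplies; securing this parity matching is the decisive point on which the entry-by-entry (as opposed to merely value-sorted) equality of the two $k$-tuples rests.
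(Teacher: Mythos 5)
Your dihedral-line framework is sound and in fact more transparent than the paper's reduction-by-blocks argument: the colour $c(z)=\gamma_s(u(z))$ is the signed position of $u(z)$ modulo $2s+1$, passive letters ($i\in A_0$) preserve it, and the two active letters shift the position by $\pm1$ according to the parity of $l(u(z))$. But your proposal stops exactly at the step everything rests on --- proving that the hypotheses force $l(u(x))\equiv l(u(y))\pmod 2$ --- and that step is not merely the hard part, it is false; your own observation that translating by $2s+1$ preserves both colours while flipping both parities is already a recipe for a counterexample. Take $s=1$, $k=2$, $A_0=\{3\}$, $A_1=\{1\}$, $A_2=\{2\}$ (the exact setting of the paper's application), write $u=u_{\{1\}\{2\}}$, and let
$$x=a_2a_1a_3,\qquad y=a_1a_3 .$$
Then $u(x)=a_2a_1$ (position $-2$) and $u(y)=a_1$ (position $+1$), so $\gamma_1(u(x))=\gamma_1(u(y))=a_1$, and $x_{\downarrow}=a_2a_1$, $y_{\downarrow}=a_1$ give $\gamma_1(u(x_{\downarrow}))=\gamma_1(u(y_{\downarrow}))=a_1$: both hypotheses hold, yet the parities of $l(u(x))$ and $l(u(y))$ differ. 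Since both $x$ and $y$ end in $a_3$, both successor tuples are indexed by $i\in\{1,2\}$, and
$$\langle\gamma_1(u(xa_1)),\gamma_1(u(xa_2))\rangle=\langle a_2,\ e\rangle,\qquad
\langle\gamma_1(u(ya_1)),\gamma_1(u(ya_2))\rangle=\langle e,\ a_2\rangle,$$
which agree as multisets but not as ordered pairs. So the parity matching you deferred cannot be established, and the proposition in its literal ordered form is false.

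What your analysis does prove --- and all that Corollary \ref{l2}, Theorem \ref{thm+} and the derivation of the system (\ref{6}) actually use --- is the unordered statement: the $A_0$-entries all equal the common colour $c(x)=c(y)$, and the active entries form the same collection, namely $\{c(x)+1,c(x)-1\}$ when the last letter is passive ($\Delta=0$), and the single value $2c(x)-c(x_{\downarrow})$ when it is active ($\Delta=\pm1$), a value determined by the hypotheses alone. You should therefore finish by proving equality of the tuples up to a permutation of entries, which your case analysis essentially already does. It is worth noting that the paper's own proof has the same character: in its final case it derives only the swapped identity $\gamma_s(x^{(n)}a_{m_j})=\gamma_s(y^{(r)}a_{m_{3-j}})$ (and indeed, in the counterexample above, $\gamma_1(u(x)a_1)=\gamma_1(u(y)a_2)=a_2$ and $\gamma_1(u(x)a_2)=\gamma_1(u(y)a_1)=e$), which yields multiset equality, not the asserted entry-by-entry equality. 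So your instinct that the ordered claim is the sticking point was exactly right; the resolution is to weaken the conclusion to equality up to permutation, not to look for a stronger argument.
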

\begin{proof} If $i\notin A_{1}\cup A_2$ then it is easy to check
that $u_{A_{1}A_{2}}(xa_{i})=u_{A_{1}A_{2}}(x)$. Consequently,
$$\gamma_s(u_{A_{1}A_{2}}(xa_{i}))=\gamma_s(u_{A_{1}A_{2}}(x))=\gamma_s(u_{A_{1}A_{2}}(y))=\gamma_s(u_{A_{1}A_{2}}(ya_{i})).$$
For the case $i\in A_{1}\cup A_2$, we show
$\gamma_s(u_{A_{1}A_{2}}(xa_{i}))\neq \gamma_s(u_{A_{1}A_{2}}(x))$
i.e., $\gamma_s(u_{A_{1}A_{2}}(xa_{m_i}))\neq
\gamma_s(u_{A_{1}A_{2}}(x))$. If
$l(u_{A_{1}A_{2}}(xa_{m_{i}}))\geq 4s+2$, then we denote $x^{(1)}$
as a new element which occurs by removing the last $4s+2$ letters
of $u_{A_{1}A_{2}}(x)$. Clearly,
$\gamma_s(u_{A_{1}A_{2}}(xa_{m_i}))=\gamma_s(u_{A_{1}A_{2}}(x^{(1)}a_{m_i}))$.
If $l(x^{(1)}a_{m_i})\geq 4s+2$ then we'll continue the above
process and get another element $x^{(2)}$, and chosen element
satisfies the following
$$\gamma_s(u_{A_{1}A_{2}}(x^{(1)}a_{m_i}))=\gamma_s(u_{A_{1}A_{2}}(x^{(2)}a_{m_i})).$$
This process carries on when we get
$l(x^{(n)}a_{m_i})<4s+2$. Hence, we get
$$\gamma_s(u_{A_{1}A_{2}}(x^{(n)}a_{m_i}))=\gamma_s(u_{A_{1}A_{2}}(x^{(n-1)}a_{m_i}))=...
=\gamma_s(u_{A_{1}A_{2}}(xa_{m_i})),$$ and
$\gamma_s(u_{A_{1}A_{2}}(x^{(n)}))=\gamma_s(u_{A_{1}A_{2}}(x))$.
Similarly, we get
$\gamma_s(u_{A_{1}A_{2}}(y^{(r)}))=\gamma_s(u_{A_{1}A_{2}}(y))$,
where $l(y^{(r)}a_{m_i})<4s+2$, $r\in \mathbb{N}$.

 If $x^{(n)}=y^{(r)}$ then from $x_{\downarrow}=y_{\downarrow}$
we have
$$\langle\gamma_s(u_{A_{1}A_{2}}(xa_{i}))\ |\ xa_i\in
S(x)\rangle=\langle\gamma_s(u_{A_{1}A_{2}}(ya_{i}))\ |\ ya_i\in S(y)\rangle.$$

Let $x^{(n)}\neq y^{(r)}$. In this case, from defining of the
function $\gamma_s$ one gets $l\left((x^{(n)})^{-1}
y^{(r)}\right)=2s+1.$ Hence, the last letters of words $x^{(n)}$
and $y^{(r)}$ coincide and we can conclude that
$\gamma_s(x^{(n)}a_{m_j})=\gamma_s(y^{(r)}a_{m_{3-j}}), \
j\in\{1,2\}$. From the last equality and conditions of the
proposition, the following equality holds:
$\langle\gamma_s(u_{A_{1}A_{2}}(xa_{i}))\ |\ xa_i\in
S(x)\rangle=\langle\gamma_s(u_{A_{1}A_{2}}(ya_{i}))\ |\ ya_i\in S(y)\rangle.$
\end{proof}

Generally speaking, Proposition \ref{l1} does not hold for all cases. Now, we give one example
which Proposition \ref{l1} doesn't hold.
\begin{exa}\label{ex+} Let $\mathcal{K}:=\Im^{1}_{\{1, 3\}\{2\}}(G_{2})$ and we choose $x=a_2a_1a_2, \ y=a_1a_3$.
 Then we have
$\gamma_1(u_{\{1, 3\}\{2\}}(x))=\gamma_1(u_{\{1, 3\}\{2\}}(y)),$
$\gamma(u_{\{1, 3\}\{2\}}(x_{\downarrow}))=\gamma_1(u_{\{1, 3\}\{2\}}(y_{\downarrow}))$.
On the other handside,
$$\langle\gamma_1(u_{\{1, 3\}\{2\}}(xa_{i}))\ |\ xa_i\in S(x)\rangle=\langle a_2, a_2\rangle,$$
$$\langle\gamma_1(u_{\{1, 3\}\{2\}}(ya_{i}))\ |\ ya_i\in S(y)\rangle=\langle a_1, a_2\rangle.$$
Namely, the invarience property does not hold: $\langle a_2, a_2\rangle\neq \langle a_1, a_2\rangle$.
\end{exa}

Now we introduce an equivalence relation on $G_k.$ Namely,
elements $x$ and $y$ are called equivalent $(x\sim y)$ if it
satisfies this condition:
$\gamma(u_{A_{1}A_{2}}(x)=\gamma(u_{A_{1}A_{2}}(y).$

Let $K_0$ be a subgroup of $G_k$ and $G_{k}/ K_0=\{K_0, K_1,
K_2,..., K_{2s}\},$ where $K_i, i\in\{0,1,...,2s\}$ are cosets.

 For $i\in\{1,2,...,2s\}$ we denote
\begin{equation}\label{notation1}S_{1}(x)=S(x)\cup \{x_{\downarrow}\},\ \
q_{i}(x)=|S_{1}(x)\cap K_{i}|,\ \
Q(x)=(q_{0}(x),q_{1}(x),q_{2}(x),..., q_{2s}(x)),\end{equation}
and
\begin{equation}\label{notation2} q_{i}(K_0)=q_{i}(e)=|\{j : a_{j}\in K_{i}\}|, \,\
Q(K)=(q_{0}(K_0),...,q_{n-1}(K_0)).\end{equation}

\begin{cor}\label{l2} Let $|A_{j}|=1,$ for $j=1,2$. If $x\sim y$, then
$q_{i}(x)=q_{i}(y)$ for $i=0,...,2s$.
\end{cor}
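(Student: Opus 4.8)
The plan is to compute the vector $Q(x)$ explicitly and show it depends only on the $\sim$-class of $x$. First I would note that $S_1(x)=S(x)\cup\{x_{\downarrow}\}$ is precisely the set of all $k+1$ tree-neighbours of $x$: if $x=a_{i_1}\cdots a_{i_n}$ then $x_{\downarrow}=xa_{i_n}$ and the successors are the $xa_j$ with $j\neq i_n$, so $S_1(x)=\{xa_j\mid j\in N_k\}$. Next, since $u_{A_1A_2}$ is a homomorphism, the coset $K_i$ containing a point $z$ is determined solely by $\gamma_s(u_{A_1A_2}(z))$: indeed the $\sim$-classes are exactly the left cosets of $K_0$, because $x^{-1}y\in K_0$ iff $l(u_{A_1A_2}(x^{-1}y))\equiv 0 \pmod{2s+1}$ by Lemma \ref{lem1}, and this is the same as $\gamma_s(u_{A_1A_2}(x))=\gamma_s(u_{A_1A_2}(y))$ (of which Lemma \ref{lem1} is the case with value $e$). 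Thus each $K_i$ is the level set of $\gamma_s\circ u_{A_1A_2}$ at some canonical short word $c_i$.

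Now I would invoke the hypothesis $|A_1|=|A_2|=1$, so $A_1=\{m_1\}$, $A_2=\{m_2\}$ and $|A_0|=k-1$. Writing $g:=u_{A_1A_2}(x)$, we have $u_{A_1A_2}(xa_j)=g$ for the $k-1$ indices $j\in A_0$, while $u_{A_1A_2}(xa_{m_1})=ga_{m_1}$ and $u_{A_1A_2}(xa_{m_2})=ga_{m_2}$. Hence the multiset of cosets met by $S_1(x)$ is
$$\{\underbrace{[\gamma_s(g)],\dots,[\gamma_s(g)]}_{k-1},\ [\gamma_s(ga_{m_1})],\ [\gamma_s(ga_{m_2})]\},$$
so $q_i(x)$ equals $k-1$ when $c_i=\gamma_s(g)$ (and $0$ otherwise), plus the number of elements of the pair $\{\gamma_s(ga_{m_1}),\gamma_s(ga_{m_2})\}$ equal to $c_i$. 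Setting $h:=u_{A_1A_2}(y)$, the relation $x\sim y$ gives $\gamma_s(g)=\gamma_s(h)$, which disposes of the $(k-1)$-fold term. Therefore proving $q_i(x)=q_i(y)$ for all $i$ reduces to the single multiset identity
$$\{\gamma_s(ga_{m_1}),\gamma_s(ga_{m_2})\}=\{\gamma_s(ha_{m_1}),\gamma_s(ha_{m_2})\}.$$

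The heart of the matter, and the step I expect to be the main obstacle, is to show that the unordered pair $\{ga_{m_1}K_0,\ ga_{m_2}K_0\}$ depends only on the coset $gK_0$. By left translation this is equivalent to the claim that $\{\eta a_{m_1}K_0,\ \eta a_{m_2}K_0\}=\{a_{m_1}K_0,\ a_{m_2}K_0\}$ for every $\eta\in K_0$. I would prove this by passing from $\langle a_{m_1},a_{m_2}\rangle$ (an infinite dihedral group) to the finite dihedral quotient $D_{2s+1}$ via $a_{m_1}\mapsto s_0$, $a_{m_2}\mapsto s_1$, where $s_0s_1$ has order $2s+1$; under this map the $2s+1$ cosets of $K_0$ become the vertices of a $(2s+1)$-gon, and the two length-$(2s+1)$ generators of $K_0\cap\langle a_{m_1},a_{m_2}\rangle$ furnished by Lemma \ref{lem1} both map to the unique reflection $t$ fixing the base vertex $v_0$ corresponding to $[e]$. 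A short index computation (using $2^{-1}\equiv s+1 \pmod{2s+1}$) then shows that $t$ interchanges the vertices $s_0 v_0$ and $s_1 v_0$, i.e.\ it swaps $a_{m_1}K_0$ and $a_{m_2}K_0$, so their unordered pair is preserved; the sub-case where $\eta$ has trivial image is immediate. Since appending $a_{m_1}$ or $a_{m_2}$ changes the length by exactly $\pm1$, one may alternatively carry this out by hand from the folding rules in \eqref{func2}, tracking how $\gamma_s$ reflects a word of length $s+1$ back to length $s$. Feeding the pair-preservation back into the displayed identity yields $q_i(x)=q_i(y)$ for all $i$, completing the proof.
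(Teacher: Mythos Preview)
Your proposal is correct, and the route is genuinely different from the paper's. The paper argues by a two-case dichotomy: after noting that the $k-1$ neighbours $xa_j$ with $j\in A_0$ stay in the class of $x$, it splits according to whether $xa_p\sim ya_p$ (with $p\in A_1$), invoking Proposition~\ref{l1} to conclude that then also $xa_q\sim ya_q$ ($q\in A_2$), and claiming in the complementary case that the two non-trivial neighbours are swapped, $xa_p\sim ya_q$ and $xa_q\sim ya_p$. Either way the multiset of cosets agrees, hence $q_i(x)=q_i(y)$.

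You bypass Proposition~\ref{l1} entirely: you observe that the coset multiset of $S_1(x)$ is $(k-1)$ copies of $[x]$ together with the unordered pair $\{[xa_{m_1}],[xa_{m_2}]\}$, and then show directly that this pair depends only on $[x]$ by passing to the finite dihedral quotient $D_{2s+1}$ of $\langle a_{m_1},a_{m_2}\rangle$, where the image of $K_0\cap\langle a_{m_1},a_{m_2}\rangle$ is $\{1,t\}$ for a reflection $t$ that interchanges the cosets of $a_{m_1}$ and $a_{m_2}$. This is a cleaner and more self-contained argument: it explains \emph{why} the identity/swap dichotomy in the paper's proof is exhaustive (it is exactly the action of $\{1,t\}$), and it does not rely on the extra hypothesis $x_\downarrow\sim y_\downarrow$ that Proposition~\ref{l1} carries but the Corollary does not assume. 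The paper's approach, on the other hand, is shorter once Proposition~\ref{l1} is available and keeps the Corollary as an immediate consequence of the invariance property rather than an independent computation.
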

\begin{proof} If $p\notin A_1\cup A_2,$ then from $x\sim y$, one gets $xa_{i}\sim
x\sim y\sim ya_{i}$.
 That's why it suuficies to check the case $p\in A_1\cup A_2.$ Let
 $p\in A_1$ (the case $p\in A_2$ is similar) and $xa_{p}\sim ya_{p}$
 then by virtue of Proposition $\ref{l1}$, the relation $xa_{q}\sim ya_{q}, q\in
 A_2$ holds i.e., $q_{i}(x)=q_{i}(y)$. Analogously, if $xa_{p}\nsim ya_{p}$ then
 from the second lemma we get $xa_{p}\sim ya_{q}$ and $xa_{q}\sim
 ya_{p}.$ Also, in that case we have $q_{i}(x)=q_{i}(y).$ \end{proof}

\begin{thm}\label{thm+} Let $K_0\in\Im^s_{A_{1}A_{2}}(G_{k}), |A_{j}|=1$ for $j=1,2$.
For any $x\in G_{k},$ there exists a permutation $\pi_x$ of the
coordinates of the vector $Q(K_{0})$ such that
$$\pi_xQ(K_{0})=Q(x).$$
\end{thm}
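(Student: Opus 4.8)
The plan is to use Corollary \ref{l2} to reduce the statement to a purely local count of neighbours, and then to show that the three cosets met by the neighbours of any vertex are always pairwise distinct.

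First I would unwind the definitions. Since $|A_1|=|A_2|=1$, write $A_1=\{m_1\}$, $A_2=\{m_2\}$ and recall $|A_0|=k-1$. The set $S_1(x)=S(x)\cup\{x_\downarrow\}$ is exactly the set of $k+1$ neighbours $\{xa_i : i\in N_k\}$. Because $u_{A_1A_2}$ is a homomorphism sending $a_i\mapsto e$ for $i\in A_0$ and $a_{m_j}\mapsto a_{m_j}$, one has $u_{A_1A_2}(xa_i)=u_{A_1A_2}(x)$ for the $k-1$ indices $i\in A_0$, while $u_{A_1A_2}(xa_{m_j})=u_{A_1A_2}(x)a_{m_j}$ for $j=1,2$. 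Hence the $k-1$ neighbours $xa_i$ ($i\in A_0$) all lie in the class of $x$, and the only possibly new classes are those of $xa_{m_1}$ and $xa_{m_2}$. Applying this to $x=e$ exhibits $Q(K_0)=Q(e)$ as the vector with entry $k-1$ in the coordinate of $K_0=[e]$ and entry $1$ in each of the coordinates of $[a_{m_1}]$ and $[a_{m_2}]$, all remaining entries being $0$.

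The heart of the argument is the claim that for every $x$ the three classes $[x]$, $[xa_{m_1}]$, $[xa_{m_2}]$ are pairwise distinct. Recalling that $x\sim y$ holds exactly when $x$ and $y$ lie in the same coset of the index-$(2s+1)$ subgroup $K_0=\Im^s_{A_1A_2}(G_k)$, we have $[x]=[xa_{m_j}]$ iff $a_{m_j}\in K_0$ and $[xa_{m_1}]=[xa_{m_2}]$ iff $a_{m_1}a_{m_2}\in K_0$. By Lemma \ref{lem1}, membership in $K_0$ is governed by divisibility of the $u$-length by $2s+1$; here $l(u_{A_1A_2}(a_{m_1}))=l(u_{A_1A_2}(a_{m_2}))=1$ and $l(u_{A_1A_2}(a_{m_1}a_{m_2}))=2$, and neither $1$ nor $2$ is divisible by $2s+1$ since $2s+1\ge 3$ is odd. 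Thus none of these three elements lies in $K_0$, and the three classes are distinct. This distinctness, in particular the inequality $[xa_{m_1}]\ne[xa_{m_2}]$ which rests on $2s+1\nmid 2$, is the step I expect to be the real obstacle; it is exactly where the hypothesis $|A_j|=1$ and the oddness of the index enter, and it is also the place where Proposition \ref{l1} can be invoked to track how $xa_{m_1}$ and $xa_{m_2}$ land in the two new classes consistently across a whole $\sim$-class.

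Finally I would assemble the count. Distinctness gives $q_{[x]}(x)=k-1$, $q_{[xa_{m_1}]}(x)=q_{[xa_{m_2}]}(x)=1$, and $q_i(x)=0$ for every other coset, so $Q(x)$ carries exactly the same multiset of entries $\{k-1,1,1,0,\dots,0\}$ as $Q(K_0)$. Therefore the permutation $\pi_x$ of coordinates that sends the $K_0$-coordinate to the $[x]$-coordinate, the $[a_{m_1}]$- and $[a_{m_2}]$-coordinates to the $[xa_{m_1}]$- and $[xa_{m_2}]$-coordinates, and bijects the remaining zero coordinates arbitrarily, satisfies $\pi_xQ(K_0)=Q(x)$. (When $k=2$ all three nonzero entries equal $1$, so the matching is even freer and the existence of $\pi_x$ is unaffected.) By Corollary \ref{l2} this vector depends only on the class of $x$, so the conclusion holds for every $x\in G_k$.
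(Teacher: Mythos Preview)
Your argument is correct, and it proceeds along a more concrete route than the paper's. The paper's proof simply invokes Proposition~\ref{l1} and Corollary~\ref{l2} to assert the existence of the map $i\mapsto j(i)$ with $q_i(K_0)=q_{j(i)}(x)$, without ever writing down what $Q(K_0)$ or $Q(x)$ actually look like. You instead compute both vectors explicitly: using only Lemma~\ref{lem1} and elementary left-coset arithmetic you show that the multiset of entries is $\{k-1,1,1,0,\dots,0\}$ in both cases, whence the permutation is immediate. The key step---that $[x],[xa_{m_1}],[xa_{m_2}]$ are pairwise distinct because neither $a_{m_j}$ nor $a_{m_1}a_{m_2}$ lies in $K_0$ (their $u$-lengths being $1$ and $2$, neither divisible by $2s+1\ge 3$)---is a clean and self-contained replacement for the invariance machinery. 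Your approach has the advantage of making the structure of $Q(x)$ completely transparent; the paper's has the advantage of showing what Proposition~\ref{l1} and Corollary~\ref{l2} are good for.

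Two small remarks. First, your reduction ``$[xa_{m_1}]=[xa_{m_2}]$ iff $a_{m_1}a_{m_2}\in K_0$'' presupposes that the classes $K_i$ are \emph{left} cosets $xK_0$; this is indeed how the paper uses $G_k/K_0$, but it is worth making explicit since $K_0$ is not normal and the right-coset version of the same statement would depend on $x$. Second, the closing appeal to Corollary~\ref{l2} is superfluous: your computation already establishes the shape of $Q(x)$ for every individual $x$, not merely for one representative per class.
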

\begin{proof} Clearly,
$S_{1}(x)=xS_{1}(e)=\{xa_{1},xa_{2},...,xa_{k+1}\}.$ By Proposition
$\ref{l1}$ and Corollary $\ref{l2}$, for any $i=\overline{0,2s}$
there exists $j(i)\in\{0,1,2,..., 2s\}$ such that
$$q_{i}(K_{0})=|\{j: \ a_{j}\in K_{i}\}|=|\{xa_{m}: xa_{m}\in
K_{j(i)}\}|=q_{j(i)}(x).$$ As a result, we choose a permutation as
$\pi_x(i)=j(i)$.\end{proof}

\subsection{Application of the invariance property} A Cayley tree (Bethe lattice) $\Gamma^k$ of order
$k\geq 1$ is an infinite homogeneous tree, i.e., a graph without
cycles, such that exactly $k+1$ edges originate from each vertex.
Let $\Gamma^k=(V,L)$ where $V$ is the set of vertices and $L$ that
of edges (arcs). It is known that there exists a one to one
correspondence between the set of vertices $V$ of the Cayley tree
$\Gamma^{k}$ and the group $G_{k}$ (see \cite{5}).

We consider models where the spin takes values in the set
$\Phi:=\{-1,1\}$, and is assigned to the vertexes of the tree. For
$A\subset V$ a configuration $\sigma_A$ on $A$ is an arbitrary
function $\sigma_A: A\to \{-1,1\}$. Denote $\Omega_A=\{-1,1\}^A$
the set of all configurations on $A$.

 The (formal) Hamiltonian of ferromagnetic Ising model is:
 \begin{equation}\label{ising} H(\sigma)=-J\sum_{\langle x,y\rangle\in L} \sigma(x)\sigma(y),
\end{equation} where $J >0$ is a coupling constant and $\langle x,y\rangle$ stands for nearest neighbor vertices.
 For a fixed $x^0\in V$, called the root, we set
 \begin{equation}\label{ball}W_n=\{x\in V| \ d(x^0,x)=n\}, \ \ \
 V_n=\bigcup\limits_{m=0}^n W_m,\end{equation} where $d(x,y)$ (distance from $x$ to $y$ on the Cayley tree)
  - is the number of edges of the shortest path from $x$ to $y$.

Define a finite-dimensional distribution of a probability measure
$\mu$ in the volume $V_n$ as
\begin{equation}\label{2.6 eq}
\mu_n(\sigma_n)=Z_n^{-1}\exp\left\{-\beta H_n(\sigma_n)+\sum_{x\in
W_n}h_x \sigma(x)\right\},
\end{equation}
where $\beta=\frac{1}{T}, T>0$-temperature, $Z_{n}^{-1}$ is the
normalizing factor, $\{h_x\in R, x\in V\}$ is a collection of real
numbers and
$$H_n(\sigma_n)=-J\sum_{\langle x,y\rangle\in L}\sigma(x)\sigma(y).$$
We say that the probability distributions \label{2.6 eq} are
compatible if for all $n\geq 1$ and $\sigma_{n-1}\in
\Phi^{V_{n-1}}:$
\begin{equation}\label{2.7 eq} \sum_{\omega_n\in
\Phi^{W_{n}}}\mu_n (\sigma_{n-1}\vee
\omega_n)=\mu_{n-1}(\sigma_{n-1}).
\end{equation}
Here $\sigma_{n-1}\vee \omega_n$ is the concatenation of the
configurations. In this case, according to the Kolmogorov
extension theorem (\cite{ash}), there exists a unique measure
$\mu$ on $\Phi^{V_n},$
$$\mu(\{\sigma | _{V_n}=\sigma_n\})=\mu_n(\sigma_n).$$
Such a measure is called a splitting Gibbs measure corresponding
to the Hamiltonian (\ref{ising}) and function $h_x, x\in V.$

The following statement describes conditions on $h_x$ guaranteeing
compatibility of $\mu_n(\sigma_n).$
\begin{thm}\label{th2.1}\emph{\cite{5}}
Probability distributions $\mu_n(\sigma_n), n=1,2,...,$ in
(\ref{2.6 eq}) are compatible iff for any $x\in V$ the following
equation holds: \begin{equation}\label{2.8 eq} h_x=\sum_{y\in
S(x)}f(h_y, \theta).\end{equation} Here, $\theta= \tanh(J\beta),
f(h, \theta)=arctanh (\theta\tanh h)$ and $S(x)$ is the set of
direct successors of $x$ on Cayley tree of order $k$.\end{thm}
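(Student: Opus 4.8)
The plan is to prove both directions of the equivalence by directly manipulating the defining formula (\ref{2.6 eq}) and summing out the spins on the outermost shell $W_n$. First I would peel off the last shell. Since every $y\in W_n$ has a unique predecessor $x=y_{\downarrow}\in W_{n-1}$ with $y\in S(x)$, the edges of $V_n$ not already present in $V_{n-1}$ are exactly the edges $\langle x,y\rangle$ with $x\in W_{n-1}$ and $y\in S(x)$. Hence for a configuration $\sigma_{n-1}\vee\omega_n$ (writing $\sigma(x)$ for the values on $W_{n-1}$ and $\omega(y)$ for those on $W_n$) one can split
$$-\beta H_n(\sigma_{n-1}\vee\omega_n)=-\beta H_{n-1}(\sigma_{n-1})+\beta J\sum_{x\in W_{n-1}}\sum_{y\in S(x)}\sigma(x)\omega(y).$$
Substituting this into $\mu_n$ and carrying out the sum over $\omega_n\in\Phi^{W_n}$, the exponential factorizes over the vertices $y\in W_n$, because each spin $\omega(y)$ appears only in the term attached to its predecessor.

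Second, I would evaluate the resulting one-spin sums. For each $y$ with predecessor $x$ one has $\sum_{\omega(y)=\pm1}\exp\{(\beta J\sigma(x)+h_y)\omega(y)\}=2\cosh(\beta J\sigma(x)+h_y)$, so that
$$\sum_{\omega_n\in\Phi^{W_n}}\mu_n(\sigma_{n-1}\vee\omega_n)=Z_n^{-1}e^{-\beta H_{n-1}(\sigma_{n-1})}\prod_{x\in W_{n-1}}\prod_{y\in S(x)}2\cosh(\beta J\sigma(x)+h_y).$$
Comparing with the required value $\mu_{n-1}(\sigma_{n-1})$ and cancelling the common factor $e^{-\beta H_{n-1}(\sigma_{n-1})}$, the compatibility condition (\ref{2.7 eq}) becomes the identity
$$\frac{Z_{n-1}}{Z_n}\prod_{x\in W_{n-1}}\prod_{y\in S(x)}2\cosh(\beta J\sigma(x)+h_y)=\exp\Big\{\sum_{x\in W_{n-1}}h_x\sigma(x)\Big\},$$
required to hold for every $\sigma_{n-1}\in\Phi^{V_{n-1}}$.

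Third, I would extract the per-vertex equation by the ratio trick: fixing all spins except $\sigma(x_0)$ for one chosen $x_0\in W_{n-1}$ and dividing the instance $\sigma(x_0)=+1$ by the instance $\sigma(x_0)=-1$, the prefactor $Z_{n-1}/Z_n$ and every factor not involving $x_0$ cancel, leaving $\prod_{y\in S(x_0)}\frac{\cosh(\beta J+h_y)}{\cosh(\beta J-h_y)}=e^{2h_{x_0}}$. Taking logarithms and using $\cosh(a\pm h)=\cosh a\cosh h\,(1\pm\tanh a\tanh h)$ with $a=\beta J$, one gets $\tfrac12\ln\frac{\cosh(\beta J+h_y)}{\cosh(\beta J-h_y)}=\text{arctanh}(\theta\tanh h_y)=f(h_y,\theta)$ with $\theta=\tanh(J\beta)$, which reduces the identity to (\ref{2.8 eq}) and proves the forward direction. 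For the converse, the same hyperbolic identity shows that whenever (\ref{2.8 eq}) holds one has $\prod_{y\in S(x)}2\cosh(\beta J\sigma(x)+h_y)=C_x\,e^{h_x\sigma(x)}$ for a constant $C_x>0$ independent of $\sigma(x)$; choosing the normalizers so that $Z_n/Z_{n-1}=\prod_{x\in W_{n-1}}C_x$ then makes the displayed identity hold for every $\sigma_{n-1}$, i.e. the measures are compatible.

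The main obstacle I anticipate is the passage from the global identity (which must hold for all boundary configurations) to the single-vertex equation (\ref{2.8 eq}); the ratio trick handles this cleanly, but one must verify that the factor $Z_{n-1}/Z_n$ together with all the factors indexed by the other vertices of $W_{n-1}$ genuinely cancel, which relies on the product structure produced by summing out $W_n$. The hyperbolic reduction to the $\text{arctanh}$ form is routine but essential, and for the converse one must check that the freedom in the normalizing constants $Z_n$ is exactly enough to absorb the $\prod_x C_x$ factor.
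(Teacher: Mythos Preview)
The paper does not supply its own proof of this theorem: it is quoted verbatim from the monograph \cite{5} and used as a black box, so there is no proof in the paper to compare against. Your argument is the standard one (and is essentially the proof given in \cite{5}): peel off the outer shell, sum each boundary spin to get a product of $\cosh$'s, and reduce via the identity $\tfrac12\ln\frac{\cosh(\beta J+h)}{\cosh(\beta J-h)}=\mathrm{arctanh}(\theta\tanh h)$.

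One small inaccuracy in the converse: you write ``choosing the normalizers so that $Z_n/Z_{n-1}=\prod_{x\in W_{n-1}}C_x$''. The $Z_n$ are not free parameters but are fixed by the requirement that each $\mu_n$ be a probability measure, so you cannot choose them. What actually happens is that this relation holds automatically: once you have shown
\[
\sum_{\omega_n}\mu_n(\sigma_{n-1}\vee\omega_n)=\frac{\prod_{x}C_x}{Z_n}\,e^{-\beta H_{n-1}(\sigma_{n-1})+\sum_{x\in W_{n-1}}h_x\sigma(x)},
\]
summing both sides over all $\sigma_{n-1}$ gives $1$ on the left (total mass of $\mu_n$) and $Z_n^{-1}\big(\prod_x C_x\big)Z_{n-1}$ on the right, forcing $Z_n=Z_{n-1}\prod_x C_x$. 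With that correction the converse is complete.
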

Since the set of vertices $V$ has the group representation $G_k.$
Without loss of generality we identify $V$ with $G_k,$ i.e., we
sometimes replace $V$ with $G_k.$
\begin{defn}\label{defn2.1} Let $K$ be a subgroup of $G_k,$ $k\geq
1.$ We say that a function $h=\{h_x\in R: x\in G_k\}$ is $K$-
periodic if $h_{yx}=h_x$ for all $x\in G_k$ and $y\in K.$ A $G_k$-
periodic function $h$ is called translation-invariant.
\end{defn}
A Gibbs measure is called $K$-\emph{periodic} if it corresponds to
$K$-periodic function $h$. Let $G_k: K=\{K_1,..., K_r\}$ be a
family of cosets, $K$ is a subgroup of index $r\in \mathbb{N}.$
\begin{defn}\label{defn 2.3} A set of quantities $h=\{h_x, x\in
G_k\}$ is called $K$-weakly periodic, if $h_x=h_{ij},$ for any
$x\in K_i, \ x_{\downarrow}\in K_j$.
\end{defn}
We note that the weakly periodic set of $h$ coincides with an
ordinary periodic one (see Definition \ref{defn2.1}) if the
quantity $h_x$ is independent of $x_{\downarrow}$.

A Gibbs measure $\mu$ is said to be $K$-\emph{weakly periodic} if
it corresponds to the $K$-weakly periodic set of $h$.

$K$-weakly periodic Gibbs measure of some models only has been
studying for the case $K$ is a normal subgroup of the group $G_k$.
Now we consider $K$-weakly periodic Gibbs measures of Ising model
on a Cayley tree of order $k$ for the case $K$ is not normal
subgroup of the group $G_k$.

Let $A_0=\{3,..., k+1\},$
$A_{s}=\{s\},\ s\in \{1,2\}$, i.e., $m_i=i,\ i\in\{1,2\}$. Now, we
consider functions $u_{\{1\}\{2\}}:\{a_{1},a_{2},...,
a_{k+1}\}\rightarrow \{e, a_{1}, a_{2}\}$ (defined in
(\ref{func1})) and $\gamma: <e, a_{1}, a_{2}> \ \rightarrow \{e,
a_{1}, a_{2}\}$ (defined in (\ref{func2})):
\begin{equation}\label{new1}u_{\{1\},\{2\}}(x)=\left\{\begin{array}{ll}
e, \ \ \mbox{if} \ \ x=a_{i}, \ i=\overline{3,k+1} \\[2.5mm]
a_{i}, \ \mbox{if}\ \ x=a_{i},\ i=\overline{1,2}\ ,
\end{array}\right.\end{equation}
\begin{equation}\label{new2} \gamma(x)=\left\{\begin{array}{lll}
e  \ \mbox{if} \ \ x=e\\[2mm]
a_{1} \ \mbox{if} \ \ x\in\{a_{1},\ a_{2}a_{1}\}\\[2.5mm]
a_{2} \ \mbox{if} \ \ x\in\{a_{2},\
a_{1}a_{2}\}\\[2.5mm]
\gamma\left(a_{i}a_{3-i}...\gamma(a_{i}a_{3-i})\right)
 \ \mbox{if} \ \ x=a_{i}a_{3-i}...a_{3-i},\ l(x)\geq 3,\
 i=\overline{1,2}\\[2.5mm]
\gamma\left(a_{i}a_{3-i}...\gamma(a_{3-i}a_{i})\right)
 \ \mbox{if} \ \ x=a_{i}a_{3-i}...a_{i},\ l(x)\geq 3,\
 i=\overline{1,2}.\\
 \end{array}\right.\end{equation}

Let $K_{0}^{\ast}:=\Im^{1}_{\{1\}\{2\}}(G_{k})$ (defined in
\ref{func3}). Then
 $$K_{0}^{\ast}=\{x\in G_{k}|\
\gamma(u_{\{1\}\{2\}}(x))=e\}.$$

By using $K_0^{\ast}$ is a subgroup of index 3 of the group
$G_{k}$ we define a family of cosets:
$$G_{2}/K^{\ast}_{0}=\{K^{\ast}_{0}, K^{\ast}_{1},
K^{\ast}_{2}\},$$ where
$$K_{1}^{\ast}=\{x\in G_{2}|\
\gamma(u_{\{1\}\{2\}}(x))=a_{1}\}, \ \ \ K_{2}^{\ast}=\{x\in
G_{2}|\ \gamma(u_{\{1\}\{2\}}(x))=a_{2}\}.$$ From notations in
(\ref{notation1}), we have
$$q_{0}(K_{0}^{\ast})=|\{a_1, a_2,..., a_{k+1}\}\cap K_{0}^{\ast}|=|\{a_{3},a_4,..., a_{k+1}\}|=k-1,$$
$$q_{1}(K_{0}^{\ast})=|\{a_1, a_2,..., a_{k+1}\}\cap K_{1}^{\ast}|=|\{a_{1}\}|=1,$$
$$q_{2}(K_{0}^{\ast})=|\{a_1, a_2,..., a_{k+1}\}\cap K_{2}^{\ast}|=|\{a_{2}\}|=1.$$
Thus, from notations in (\ref{notation2}) one gets
$Q(K_{0}^{\ast})=\{k-1,1,1\}$.

Assume now $x\in K_{1}^{\ast}$ (case $x\in K_{2}^{\ast}$ is
similar), namely, $\gamma(u_{\{1\}\{2\}}(x))=a_{1}$. Then
$\gamma(u_{\{1\}\{2\}}(xa_1))\neq\gamma(u_{\{1\}\{2\}}(xa_2))$ and
$\{\gamma(u_{\{1\}\{2\}}(xa_1)),
\gamma(u_{\{1\}\{2\}}(xa_2))\}=\{e, a_2\}$,
$\gamma(u_{\{1\}\{2\}}(xa_{i}))=a_{1}, \ i\in\{3,4,..., k+1\}$.
Consequently,
$$q_0(x)=|\{xa_1, xa_2,..., xa_{k+1}\}\cap K_{0}^{\ast}|=1,$$
$$q_1(x)=|\{xa_1, xa_2,..., xa_{k+1}\}\cap K_{1}^{\ast}|=k-1,$$
$$q_2(x)=|\{xa_1, xa_2,..., xa_{k+1}\}\cap K_{2}^{\ast}|=1.$$
Hence $Q(x)=(1,k-1,1)$. By Theorem \ref{thm+} there exist a
permutation of coordinates of $Q(K_{0}^{\ast})$, i.e., this
permutation $\pi_x$ of coordinates of $Q(K_{0}^{\ast})$ has the
form $\pi_x=(12)$. In Fig. 1, the partitions of $\Gamma^2$ with
respect to $K_{0}^{\ast}$ is given. The elements of the class
$K_{0}^{\ast}, K_{1}^{\ast}, K_{2}^{\ast}$ are denoted by
 \textbf{blue}, \textbf{red} and \textbf{black} respectively.
\begin{figure}
\includegraphics[width=16.0cm]{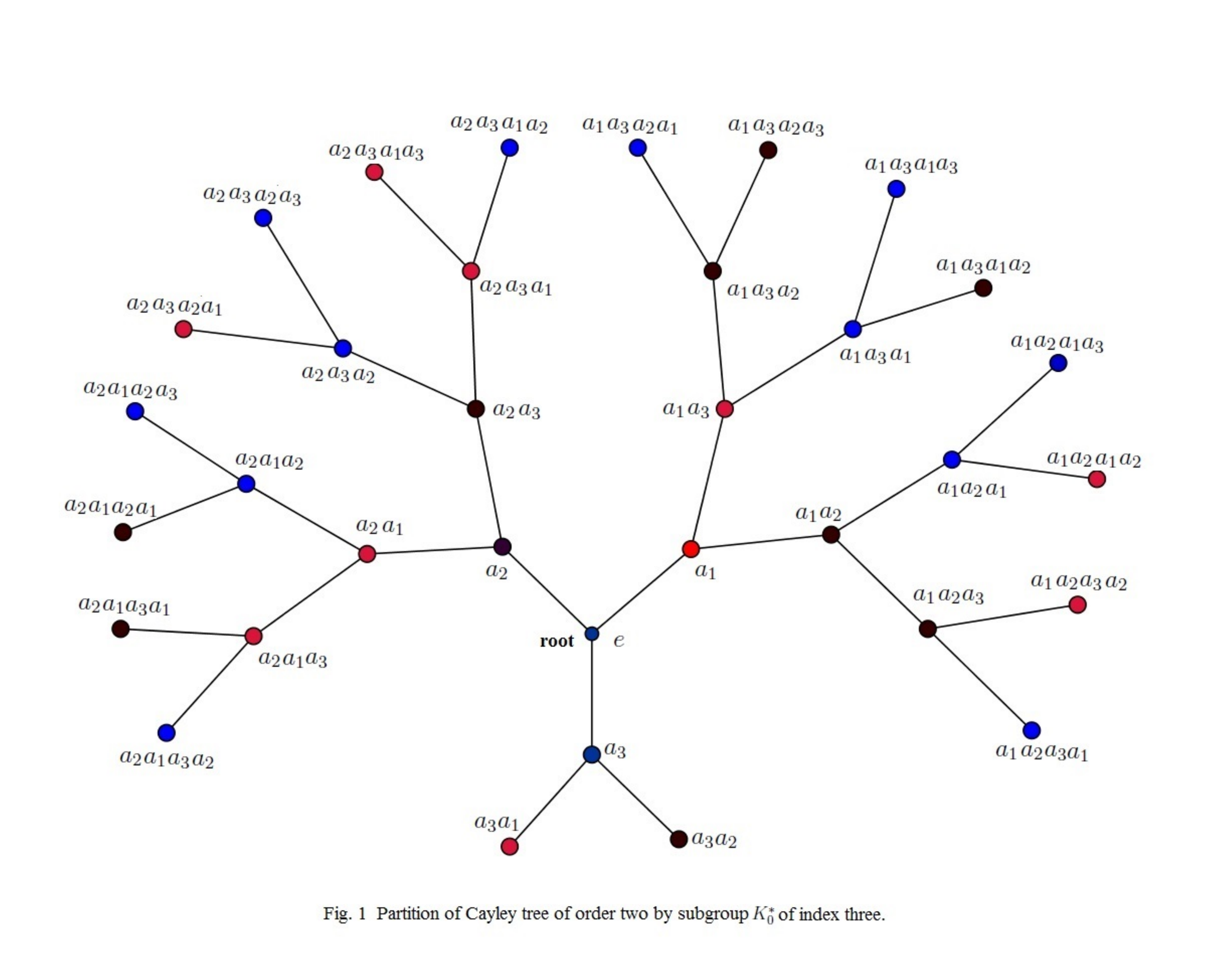}
\label{f1}
\end{figure}

Then $K_{0}^{\ast}$-weakly
periodic set of $h$ has the form
\begin{equation}\label{m5}
h_x=\left\{%
\begin{array}{ll}
    h_{1}, & {x \in K_{0}^{\ast}, \ x_{\downarrow} \in K_{0}^{\ast}}
    \\ [2mm]
    h_{2}, & {x \in K_{0}^{\ast}, \ x_{\downarrow} \in K_{1}^{\ast}}
    \\ [2mm]
    h_{3}, & {x \in K_{0}^{\ast}, \ x_{\downarrow} \in K_{2}^{\ast}}
    \\ [2mm]
    h_{4}, & {x \in K_{1}^{\ast}, \ x_{\downarrow} \in K_{0}^{\ast}}
    \\ [2mm]
    h_{5}, & {x \in K_{1}^{\ast}, \ x_{\downarrow} \in K_{1}^{\ast}}
    \\ [2mm]
    h_{6}, & {x \in K_{1}^{\ast}, \ x_{\downarrow} \in K_{2}^{\ast}}
    \\ [2mm]
    h_{7}, & {x \in K_{2}^{\ast}, \ x_{\downarrow} \in K_{0}^{\ast}}
    \\ [2mm]
    h_{8}, & {x \in K_{2}^{\ast}, \ x_{\downarrow} \in K_{1}^{\ast}}
    \\ [2mm]
    h_{9}, & {x \in K_{2}^{\ast}, \ x_{\downarrow} \in K_{2}^{\ast},}
\end{array}%
\right.
\end{equation}
where $h_{i},i=\overline{1,9}$, in view of (\ref{2.8 eq}) satisfy the following equations:
\begin{equation}\label{6}
\left\{%
\begin{array}{ll}
    h_{1}=(k-2)f(h_{1},\theta)+f(h_{4},\theta)+f(h_{7},\theta) \\ [2mm]
    h_{2}=(k-1)f(h_{1},\theta)+f(h_{7},\theta) \\ [2mm]
    h_{3}=(k-1)f(h_{1},\theta)+f(h_{4},\theta) \\ [2mm]
    h_{4}=(k-1)f(h_{5},\theta)+f(h_{8},\theta) \\ [2mm]
    h_{5}=(k-2)f(h_{5},\theta)+f(h_{2},\theta)+f(h_{8},\theta) \\ [2mm]
    h_{6}=(k-1)f(h_{5},\theta)+f(h_{2},\theta) \\ [2mm]
    h_{7}=(k-1)f(h_{9},\theta)+f(h_{6},\theta) \\ [2mm]
    h_{8}=(k-1)f(h_{9},\theta)+f(h_{3},\theta) \\[2mm]
    h_{9}=(k-2)f(h_{9},\theta)+f(h_{6},\theta)+f(h_{3},\theta).
\end{array}%
\right.\end{equation} Consider operator $W_k:R^9\rightarrow R^9$,
defined by
\begin{equation}\label{7}
\left\{%
\begin{array}{ll}
    h'_{1}=(k-2)f(h_{1},\theta)+f(h_{4},\theta)+f(h_{7},\theta)
    \\[2mm]
    h'_{2}=(k-1)f(h_{1},\theta)+f(h_{7},\theta) \\
    [2mm]
    h'_{3}=(k-1)f(h_{1},\theta)+f(h_{4},\theta) \\ [2mm]
    h'_{4}=(k-1)f(h_{5},\theta)+f(h_{8},\theta) \\ [2mm]
    h'_{5}=(k-2)f(h_{5},\theta)+f(h_{2},\theta)+f(h_{8},\theta) \\
    [2mm]
    h'_{6}=(k-1)f(h_{5},\theta)+f(h_{2},\theta) \\ [2mm]
    h'_{7}=(k-1)f(h_{9},\theta)+f(h_{6},\theta) \\ [2mm]
    h'_{8}=(k-1)f(h_{9},\theta)+f(h_{3},\theta) \\ [2mm]
    h'_{9}=(k-2)f(h_{9},\theta)+f(h_{6},\theta)+f(h_{3},\theta).
\end{array}%
\right.\end{equation}
 Note that the system of equations (\ref{6})
is the equation $h=W_k(h)$. It is obvious that the following sets
are invariant with respect to operator $W_k:$\\
 1) For case $k\geq
2$: $$I_0=\{h\in R^9: h_1=h_2=h_3=h_4=h_5=h_6= h_7=h_8=h_9\},$$ $$
I_1 =\{h\in R^9: h_1=h_2=h_4=h_5, h_3=h_6, h_7=h_8\},$$
$$I_2=\{h\in R^9: h_2=h_3; h_4=h_7; h_5=h_6=h_8=h_9\}.$$
2) For case $k=2$:
$$I_3=\{h\in R^9: h_1=h_6=h_8; h_2=h_3=h_4=h_5=h_7=h_9\},$$
$$I_4=\{h\in R^9: h_1=h_2=h_4=h_6=h_8=h_9; h_3=h_5=h_7\},$$
$$I_5=\{h\in R^9: h_1=h_3=h_5=h_6=h_7=h_8; h_2=h_4=h_9\}.$$

\begin{pro}\label{pro+} For the Ising model on $\Gamma^2$ the following assertions
hold:\\
1) All $K_{0}^{\ast}$-weakly periodic Gibbs measures on invariant
set $I_{3}$ (resp. ${{I}_{4}},{{I}_{5}}$) are translation-invariant.\\
2)For $\theta\in[0.5, 1)$ there exists two $K_{0}^{\ast}$-weakly
periodic Gibbs measures (not translation-invariant) on $I_{1}$
(resp. $I_2$). If $\theta\notin[0.5, 1)$ then any
$K_{0}^{\ast}$-weakly periodic Gibbs measure is
translation-invariant.
\end{pro}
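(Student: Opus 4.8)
The plan is to turn the fixed-point problem $h=W_{2}(h)$ of (\ref{6}) into a small system on each invariant set by substituting the defining equalities, and then to exploit strict monotonicity, oddness and concavity of $f(\cdot,\theta)=\operatorname{arctanh}(\theta\tanh(\cdot))$ on $(0,\infty)$. Substituting into (\ref{6}) with $k=2$ (so the coefficients are $k-2=0$ and $k-1=1$): on $I_{3}$, writing $u=h_{1}=h_{6}=h_{8}$ and $v$ for the common value of the other six coordinates, one gets $u=2f(v)$, $v=f(u)+f(v)$ and, from the $h_{9}$-row, $v=2f(v)$; on $I_{4}$ and $I_{5}$ one gets $u=f(u)+f(v)$, $v=2f(u)$; and on $I_{1}$, writing $a=h_{1}=h_{2}=h_{4}=h_{5}$, $b=h_{3}=h_{6}$, $c=h_{7}=h_{8}$, $d=h_{9}$, the system collapses to
\[
a=f(a)+f(c),\qquad b=2f(a),\qquad c=f(b)+f(d),\qquad d=2f(b),
\]
with $I_{2}$ giving the mirror system. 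In each case a solution is translation-invariant exactly when all nine coordinates coincide, i.e. when the reduced variables are all equal to a common $t$ solving the Ising equation $t=2f(t)$ on $\Gamma^{2}$.

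For assertion (1) I would argue purely from monotonicity. On $I_{3}$, inserting $v=2f(v)$ (hence $v-f(v)=f(v)$) into $v=f(u)+f(v)$ gives $f(u)=f(v)$, and injectivity of $f$ forces $u=v$; thus all nine coordinates agree and the measure is translation-invariant. On $I_{4}$ and $I_{5}$ the two reduced equations give $u-v=f(v)-f(u)$, that is $(u-v)+(f(u)-f(v))=0$; since $f$ is strictly increasing, both summands have the sign of $u-v$, so $u=v$, again translation-invariant. This disposes of $I_{3},I_{4},I_{5}$.

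For assertion (2) I would reduce the $I_{1}$-system to one scalar equation by eliminating $b=2f(a)$, $d=2f(2f(a))$ and $c=f(2f(a))+f(2f(2f(a)))$, leaving $a=\Psi(a)$ with $\Psi(a)=f(a)+f(c(a))$; here $\Psi$ is odd and, being a sum of compositions of concave increasing functions, is concave on $(0,\infty)$. A direct differentiation gives $\Psi'(0)=\theta+2\theta^{3}+4\theta^{4}$, and $\Psi'(0)=1$ is equivalent to $4\theta^{4}+2\theta^{3}+\theta-1=(2\theta-1)(2\theta^{3}+2\theta^{2}+\theta+1)=0$, i.e. to $\theta=\tfrac12$. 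For $\theta<\tfrac12$ concavity together with $\Psi(0)=0$ and $\Psi'(0)<1$ yields $\Psi(a)<a$ for all $a>0$, so the only solution is $a=0$ and every such measure is translation-invariant; together with the analogous computation for $I_{2}$ this proves the ``$\theta\notin[0.5,1)$'' clause.

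The main obstacle is the remaining case $\theta\in[\tfrac12,1)$: I must produce \emph{non}-translation-invariant solutions and show there are exactly two of them. The translation-invariant root $t^{\ast}$ of $t=2f(t)$ is always a root of $\Psi$, and the concavity of $\Psi$ makes the count of positive roots of $\Psi(a)-a$ delicate—a crude concavity estimate only guarantees a single positive root, which is the translation-invariant one. Separating a genuinely non-translation-invariant branch therefore requires a finer analysis: tracking the solution $(a,b,c,d)$ as $\theta$ crosses $\tfrac12$, verifying that it leaves the diagonal $a=b=c=d$, and using the oddness $\Psi(-a)=-\Psi(a)$ to pair it with its reflection so as to obtain precisely two such measures (and likewise for $I_{2}$). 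Throughout, it is Proposition \ref{l1} and Corollary \ref{l2} that justify reducing to the single system (\ref{6}) in the first place: without the invariance property the coset bookkeeping behind (\ref{6}) would not close, so that input is essential to the whole argument.
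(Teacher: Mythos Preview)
Your reduction on $I_3$ contains a slip: with $k=2$ the $h_9$-row of (\ref{6}) is $h_9=f(h_6)+f(h_3)$, which on $I_3$ gives $v=f(u)+f(v)$, not $v=2f(v)$. The correct reduced system on $I_3$ is thus $u=2f(v)$, $v=f(u)+f(v)$ --- the same pair (with $u,v$ interchanged) that you derive on $I_4$ and $I_5$ --- so your clean monotonicity argument for $I_4,I_5$ covers $I_3$ as well, but the short-cut through injectivity of $f$ is not available.

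For part (2) the paper takes a different route from yours. It passes to $z_i=e^{2h_i}$ and the M\"obius map $g(z)=(z+a)/(az+1)$ with $a=(1-\theta)/(1+\theta)$, eliminates $z_1,z_7,z_9$ in favour of $x=\sqrt{z_3}$, and obtains an explicit polynomial that factors as $(x-1)(x+1)\bigl(ax^2+(a-1)x+a\bigr)Q(x)=0$; it then checks by hand that $Q$ has no positive root and reads off the two extra positive roots of the quadratic precisely when $a\le 1/3$, i.e.\ $\theta\ge 1/2$. Your concavity approach is more conceptual, and your computation of $\Psi'(0)=\theta+2\theta^3+4\theta^4$ together with the factorisation $(2\theta-1)(2\theta^3+2\theta^2+\theta+1)$ is correct.

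The genuine gap is exactly the point you flag yourself. Your elimination $b=2f(a)$, $d=2f(b)$, $c=f(b)+f(d)$ sets up a \emph{bijection} between solutions of the four-variable system and roots of $a=\Psi(a)$; there is no hidden branch to ``track off the diagonal''. Since $\Psi$ is odd, bounded, and concave on $(0,\infty)$ with $\Psi(0)=0$, it has at most one positive fixed point, and you have already observed that the translation-invariant value $t^\ast$ (the positive root of $t=2f(t)$) is such a fixed point. Carried to its conclusion, your argument therefore forces every $I_1$-solution to be translation-invariant; the proposed bifurcation rescue cannot work. In the paper's variables the same thing is visible: the translation-invariant condition $z=g(z)^2$ becomes $x=g(x^2)$, i.e.\ $ax^3-x^2+x-a=(x-1)\bigl(ax^2+(a-1)x+a\bigr)=0$, which is exactly the non-$Q$ part of the paper's factorisation. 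Thus the two roots coming from the quadratic are the two non-zero translation-invariant values $e^{\pm t^\ast}$, not genuinely new weakly periodic ones. Your concavity method is not merely falling short of the stated claim; it is correctly signalling that, on $I_1$, no non-translation-invariant solutions arise.
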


\begin{proof} a) It suffices to show the system of equations (\ref{6})
has only root of the form $h_1=h_2=h_3=h_4=h_5=h_6= h_7=h_8=h_9$.
The proof is obvious for the set $I_0.$ Now we prove this
assertion for the invariant set ${{I}_{3}}$ (the cases
${{I}_{4}},{{I}_{5}}$ are similar).
 On the invariant set $I_3=\{h\in R^9: h_1=h_6=h_8; h_2=h_3=h_4=h_5=h_7=h_9\}$, equation (\ref{6}) can be written as
\begin{equation}\label{3.6}\left\{%
\begin{array}{ll}
    {{h}_{1}}=f({{h}_{1}},\theta )+f({{h}_{2}},\theta )\\ [2mm]
    {{h}_{2}}=f({{h}_{2}},\theta )+f({{h}_{1}},\theta ).
\end{array}%
\right.\end{equation}
 Right hand sides of the system are the same, and that's why one gets $h_{1}=h_{7}$.
Hence, all $K_{0}^{\ast}$- weakly periodic Gibbs measures on
invariant set $I_3$ are translation-invariant.

 b) Now we consider equation (\ref{6}) on the invariant sets:
${{I}_{1}},{{I}_{2}}$. In this case, it is sufficient to analyze
the following equation:
\begin{equation}\label{3.7}\left\{%
\begin{array}{ll}
 {{h}_{1}}=f({{h}_{1}},\theta)+f({{h}_{7}},\theta)\\ [2mm]
 {{h}_{3}}=2f({{h}_{1}},\theta)\\ [2mm]
 {{h}_{7}}=f({{h}_{9}},\theta)+f({{h}_{3}},\theta)\\ [2mm]
 {{h}_{9}}=2f({{h}_{3}},\theta) \\
\end{array} \right. \end{equation}
which is obtained by restricting the operator $W_2$ to $I_1$ (the
case ${{I}_{2}}$ is similar). Using the fact that
$$f(h,\theta )=\frac{1}{2}\ln \frac{(1+\theta ){{e}^{2h}}+1-\theta
}{(1-\theta ){{e}^{2h}}+1+\theta }$$ and introducing the notations
${{z}_{i}}={{e}^{2{{h}_{i}}}},\ i=\{1,3,7,9\}$ and
$a=\frac{1-\theta }{1+\theta}$ we obtain the following system of
equations instead of (\ref{3.7}):
$$\left\{%
\begin{array}{ll}
 {{z}_{1}}=g({{z}_{1}})g({{z}_{7}}) \\ [2mm]
 {{z}_{3}}={{g}^{2}}({{z}_{1}}) \\ [2mm]
 {{z}_{7}}=g({{z}_{9}})g({{z}_{3}}) \\ [2mm]
 {{z}_{9}}={{g}^{2}}({{z}_{3}}),
\end{array}\right.$$
where $g(z)=\frac{z+a}{az+1}$. The last system can be rewritten as
the following form:
\begin{equation}\label{3.8}{{g}^{-1}}\left( \frac{{{g}^{-1}}
(\sqrt{{{z}_{3}}})}{\sqrt{{{z}_{3}}}} \right)= g\left(
{{g}^{2}}\left( {{z}_{3}} \right) \right)g\left( {{z}_{3}} \right)
\end{equation}
Using simple analysis one gets
$$\frac{\left( {{a}^{2}}{{x}^{2}}+\left( 1-a \right)\cdot x-a
\right)}{-a{{x}^{2}}+\left( 1-a \right)\cdot
x+{{a}^{2}}}=\frac{\left( \left( {{a}^{2}}-a+1 \right)\cdot
{{x}^{6}}+\left( {{a}^{3}}-{{a}^{2}}+3a \right){{x}^{4}}+\left(
2{{a}^{2}}+a \right){{x}^{2}}+{{a}^{2}} \right)}{\left(
{{a}^{2}}{{x}^{6}}+\left( 2{{a}^{2}}+a \right)\cdot
{{x}^{4}}+\left( {{a}^{3}}-{{a}^{2}}+3a
\right){{x}^{2}}+{{a}^{2}}-a+1 \right)},$$ where
$\sqrt{{{z}_{3}}}=x$.
  From simple calculation, we have
$$P(x):=\left(x-1 \right)\left( x+1 \right)\left( a{{x}^{2}}
+x(a-1)+a \right)\cdot Q(x)=0,$$ where
$$Q(x):=\left({{a}^{3}}+{{a}^{2}}-a+1 \right){{x}^{4}}+\left(
a-{{a}^{3}} \right){{x}^{3}}+\left( 3{{a}^{3}}-{{a}^{2}}+a+1
\right){{x}^{2}}+\left( a-{{a}^{3}}
\right)x+{{a}^{3}}+{{a}^{2}}-a+1.$$ At first, we show that there
is not any positive root of the polynomial $Q(x)$.

Let $x\in \left[ 0,1 \right]$. If $a-a^3\geq 0$ then $Q(x)>0.$ By
this occasion, it's sufficient to check the case $a-a^3\leq 0$:
$$Q(x)=\left({{a}^{3}}+{{a}^{2}}-a+1 \right)({{x}^{4}+1})+\left(
a-{{a}^{3}} \right)({{x}^{3}+x})+\left(3{{a}^{3}}-{{a}^{2}}+a+1
\right){{x}^{2}}\geq $$
$$\left({{a}^{3}}+{{a}^{2}}-a+1 \right)({{x}^{4}+1})+2\left(
a-{{a}^{3}} \right)+\left(3{{a}^{3}}-{{a}^{2}}+a+1
\right){{x}^{2}}$$ From ${{a}^{3}}+{{a}^{2}}-a+1>0$ and
$3{{a}^{3}}-{{a}^{2}}+a+1>0$ we get
$$Q(x)\geq\left( {{a}^{3}}+{{a}^{2}}-a+1 \right)\left( {{x}^{4}}+1
\right)+2\left( a-{{a}^{3}}
\right)={{a}^{3}}+{{a}^{2}}-a+1+2\left( a-{{a}^{3}}
\right)=-{{a}^{3}}+{{a}^{2}}+a+1>0$$

Now we consider the case $x\in (1,\infty)$. Then
$$Q(x)=\left({{a}^{3}}+{{a}^{2}}-a+1 \right){{x}^{4}}+\left( a-{{a}^{3}}
 \right){{x}^{3}}+\left( a-{{a}^{3}} \right)x\geq$$
 $$ \left( {{a}^{3}}+{{a}^{2}}-a+1 \right){{x}^{4}}+
 \left( a-{{a}^{3}} \right){{x}^{4}}+\left( a-{{a}^{3}}
 \right)x=$$
 $$\left( {{a}^{2}}+1 \right){{x}^{4}}+\left( a-{{a}^{3}}
 \right)x>
  \left( {{a}^{2}}+1 \right)x+\left( a-{{a}^{3}} \right)x\geq
  \left( -{{a}^{3}}+{{a}^{2}}+a+1\right)>0. $$
 Hence, there is not any positive solution of $Q(x)$.
Instead of studying positive roots of $P(x)$, it's enough to check
positive solutions of the following equation:
 $\left( x-1 \right)\left(x+1 \right)\left(a{{x}^{2}}+x(a-1)+a \right)=0$.
 If $x=1$ then solutions of the system (\ref{3.8}) coincide with
 translation-invariant solutions. It's easy to check that if $a\in(0,\frac{1}{3}]$ (resp. $\theta\in[0.5, 1)$) then
 the quadratic equation $a{{x}^{2}}+x(a-1)+a=0$ has two positive solutions, i.e., $x_{1,2}=\frac{1-a\pm\sqrt{1-3a^2-2a}}{2a}$.
If $a\notin(0,\frac{1}{3}]$ then this quadratic equation has not
any positive solution. This completes the proof.

\end{proof}


\begin{thebibliography}{99}

\bibitem{1} D.E.Cohen, R.C.Lyndon: Free bases for normal
subgroups of free groups, {\it Trans.Amer.Math.Soc.} \textbf{108}
(1963), pp. 526-537.

\bibitem{2} D.S, Malik, J.N, Mordeson, M.K, Sen: Fundamentals
of Abstract Algebra, {\it McGraw-Hill Com.} (1997).

\bibitem{nr} E.P. Normatov, U.A. Rozikov: Description
of harmonic functions using group representations
 of the Cayley tree, {\it Math.Notes.} \textbf{79},(2006), pp. 399-407.

\bibitem{hay} F.H.Haydarov:  New normal subgroups for the group
representation of the Cayley tree, {\it Lobachevskii Journal of
Mathematics} \textbf{39(2)}, (2018), pp. 213-217.

\bibitem{7} J.W.Young: On the partitions of a group and the
resulting classification, {\it Bull.Amer.Math.Soc}
\textbf{33},(1927), pp.453-461.

\bibitem{3} N.N.Ganikhodjaev, U.A. Rozikov: Description of
periodic extreme Gibbs measures of some lattice model on the
Cayley tree, {\it Theor.Math.Phys.} \textbf{111},(1997), pp.
480-486.

\bibitem{ash} R.B.Ash, C.D.Dol\'{e}ans: Probability and measure
theory, {\it A Harcourt Science and Technology Company} (2000).

\bibitem{4} U.A. Rozikov, F.H. Haydarov: Normal subgroups of
finite index for the group represantation of the Cayley tree, {\it
TWMS Jour.Pure.Appl.Math.} \textbf{5}, (2014), pp. 234-240.

\bibitem{5} U.A. Rozikov: Gibbs measures on a Cayley
tree. {\it World Sci. Pub, Singapore} (2013).

\bibitem{vvnr}V.B.Levgen, V.B.Natalia, N.S.Said, R.Z.Flavia: On the conjugacy
problem for finite-state automorphisms of regular rooted trees.
{\it Groups,Geometry, and Dynamics} \textbf{7(2)}, (2013), pp.
323-355.

\end{thebibliography}
\end{document}